\providecommand{\U}[1]{\protect\rule{.1in}{.1in}}
\newtheorem{theorem}{Theorem}
\newtheorem{corollary}[theorem]{Corollary}
\newtheorem{definition}[theorem]{Definition}
\newtheorem{proposition}[theorem]{Proposition}
\newtheorem{remark}[theorem]{Remark}
\newenvironment{proof}[1][Proof]{\noindent\textbf{#1.} }{\ \rule{0.5em}{0.5em}}
\begin{document}

\title{Steady waves in flows over periodic bottoms}
\author{Walter Craig \thanks{This work was submitted posthumously on behalf of the
first author.}
\and Carlos Garc\'{\i}a-Azpeitia \thanks{Depto. Matem\'{a}ticas y Mec\'{a}nica
IIMAS, Universidad Nacional Aut\'{o}noma de M\'{e}xico, Apdo. Postal 20-726,
01000 Ciudad de M\'{e}xico, M\'{e}xico. cgazpe@mym.iimas.unam.mx} }
\maketitle

\begin{abstract}
We study the formation of steady waves in two-dimensional fluids under a
current with mean velocity $c$ flowing over a periodic bottom. Using a
formulation based on the Dirichlet-Neumann operator, we establish the unique
continuation of a steady solution from the trivial solution when a flat
bottom is perturbed, except for a sequence of velocities $c_{k}$. The main
contribution is the proof that at least two steady solutions exist close to
a non-degenerate $S^{1}$-orbit of non-constant steady waves when a flat
bottom is perturbed. Consequently, we obtain persistence of at least two
steady waves close to a non-degenerate $S^{1}$-orbit of Stokes waves
bifurcating from the velocities $c_{k}$.
\end{abstract}

\section{Introduction}

Stokes' analysis of periodic water waves in a region of infinite depth
heralded much interest in the field of fluid dynamics. In the early
twentieth century, Nekrasov and Levi-Civita first rigorously proved the
existence of the Stokes waves (two-dimensional $2\pi $-periodic gravity
waves on water of infinite depth). Stokes waves are steady solutions when
they are viewed in a reference frame moving with speed $c$. This result was
extended to the case of finite flat bottoms by Struik. These waves appear as
a bifurcation from the trivial solution for the velocities 
\begin{equation*}
c_{k}:=\left( g\frac{1}{\left\vert k\right\vert }\tanh (h\left\vert
k\right\vert )\right) ^{1/2},\qquad k\in \mathbb{N},
\end{equation*}%
where $g$ is the acceleration of gravity and $h$ is the deepness of the
bottom. Stokes conjectured that the primary branch from $c_{1}$ is limited
by an extreme wave with a singularity at their crest forming a $2\pi /3$%
-angle. The first rigorous global bifurcation of traveling waves (Stokes
waves) was presented in \cite{Krasovskii2}, but Stokes conjecture was proved
only recently in \cite{AFT} and \cite{P}.

For varying bottoms, the Euler equation is not invariant by translations
anymore, i.e., the problem cannot be reduced to a steady wave in a moving
reference frame. On the other hand, for $2\pi $-periodic bottoms $-h+b(x)$,
steady waves exist in a stream current of mean velocity $c$ in a fixed
reference frame. The formation of steady waves in two-dimensional flows for
near-flat bottoms (with small $b$) has been studied previously in \cite%
{Gerber}, \cite{Krasovskii1}, \cite{Moiseev} and \cite{Shinbrot}. The first
rigorous result in \cite{Gerber} proves the existence of one steady solution
for bottoms with two extrema per period, except for the sequence of
degenerate values $c_{k}$. Those results were extended in \cite{Krasovskii1}
to any near-flat bottom and to general bottoms in the case that $c$ is large
enough. Under similar hypothesis, according to \cite{Moiseev}, the top of
the fluid follows the bottom when $c>c_{\ast }$, but it inverts when $%
c<c_{\ast }$. Later on, the existence of steady solutions for a
three-dimensional fluid was analyzed in \cite{Shinbrot}.

The purpose of this work is to investigate further the existence of
two-dimensional steady waves over a periodic bottom. We formulate the Euler
equation as a Hamiltonian system similarly to \cite{Za}. The work \cite{La}
contains a short exposition of the different formulations of the Euler
equation. We formulate the Hamiltonian using the Dirichlet-Neumann operator
and a mean stream current with velocity $c$ analogously to the formulations
in \cite{CrGu}, \cite{CrNi1} and \cite{CrSu1}. Using this approach, in
Theorem \ref{Thm1} we recover the result in \cite{Krasovskii1} regarding the
unique continuation of the trivial solution for small perturbations of
bottom $b$, except for the sequence of velocities $c_{k}$.

The Hamiltonian for water waves in flat bottoms $b=0$ is $S^{1}$-invariant,
where the group 
\begin{equation*}
S^{1}:=\mathbb{R}/2\pi \mathbb{Z}
\end{equation*}%
acts by translations in the periodic domain $[0,2\pi ]$. The $S^{1}$%
-invariance of the Hamiltonian implies that its Hessian at the trivial
solution with velocity $c_{k}$ has a two-dimensional kernel. The paper \cite%
{Ig} presents an analysis of the set of solutions near the bifurcation point 
$c=c_{k}$ by classifying the patterns of bifurcation according to the shape
of the bottom, but only in the space of even surfaces and for even bottoms,
which simplifies the problem because the kernel becomes one dimensional
under these constraints. The analysis of the bifurcation diagram for general
surfaces and small bottoms near $c=c_{k}$ is difficult because the kernel is
two-dimensional, but the Hamiltonian is not $S^{1}$-invariant anymore.
Indeed, even to determine the splitting of linear eigenvalues of the Hessian
near the degenerate velocities $c_{k}$ is a challenging task; for example,
this phenomenon is analyzed in \cite{CrSu2}. On the other hand, our
Hamiltonian formulation allows us to prove the persistence of solutions far
from the degenerate velocities. Specifically,

\begin{figure}[t]
\resizebox{14cm}{!}{\hspace{-4cm} \includegraphics{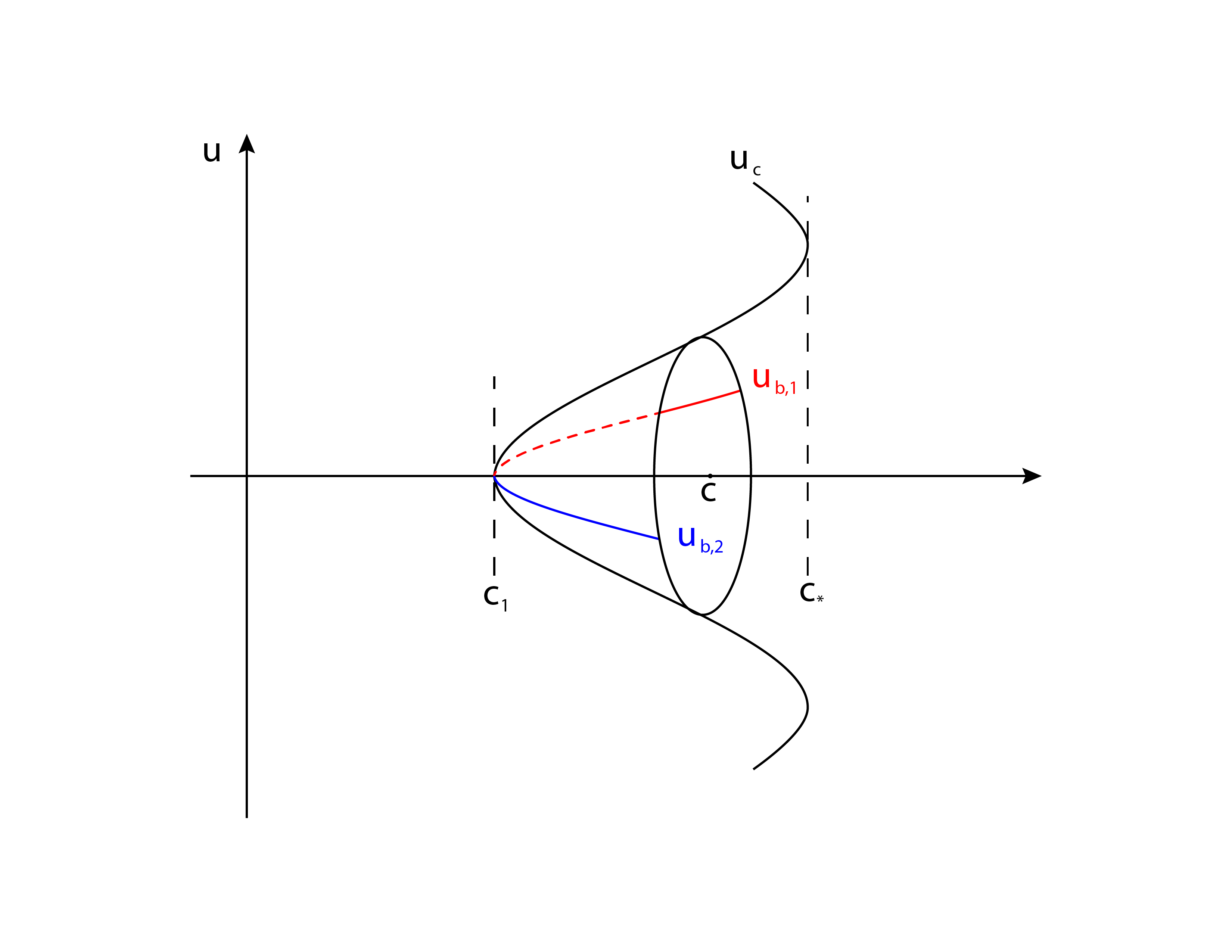} \hspace{-8cm}
\includegraphics{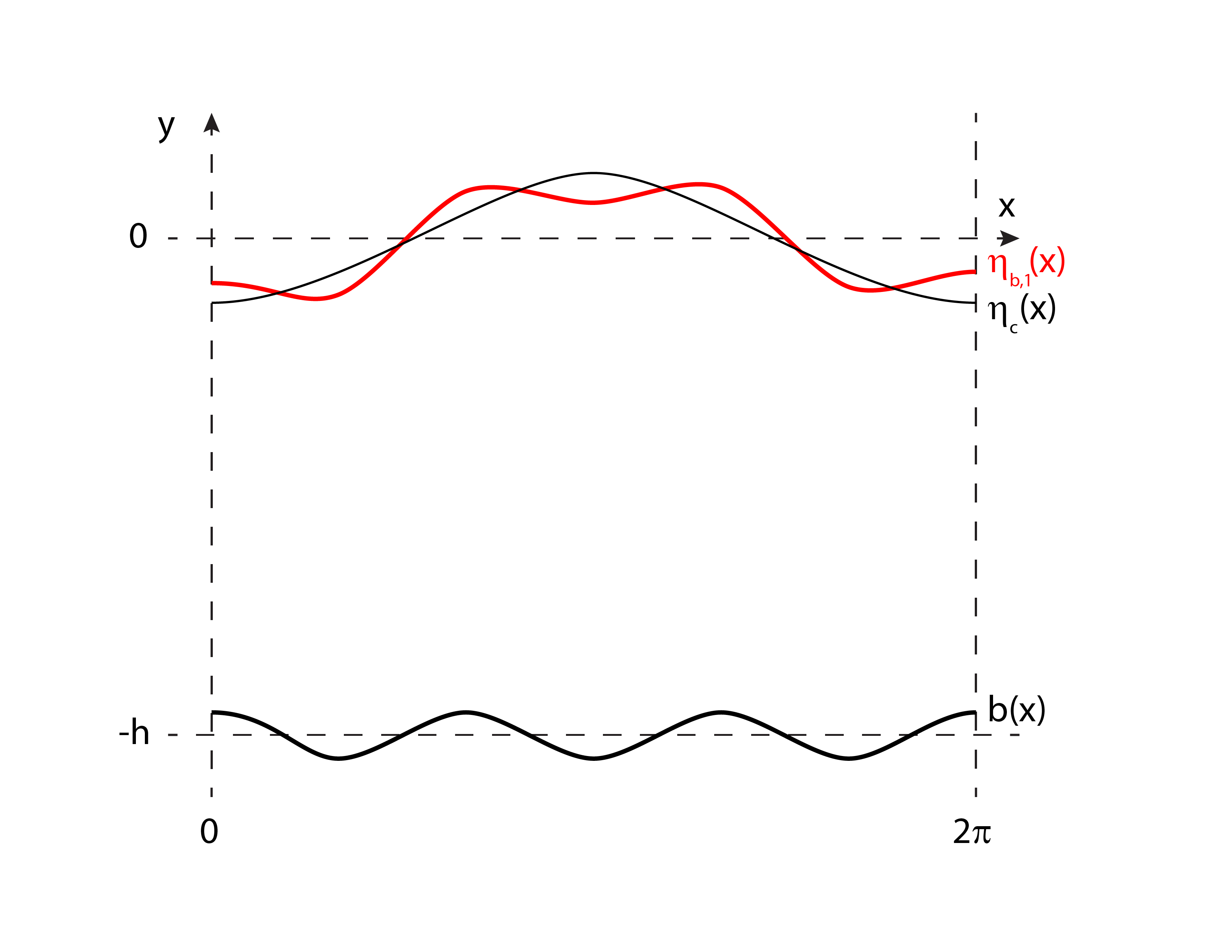} } 
\caption{Left: Bifurcation diagram of the family of $S^{1}$-orbits of
solutions $u_{c}$ (Stokes wave). At least two solutions $u_{b,1}$ (red) and $%
u_{b,2}$ (blue) persist when $b$ is small, except for degenerate values such
as $c_{1}$ and $c_{\ast}$. Right: Illustration of the surface $\protect\eta%
_{b,1}(x)$ (red)$\ $near the surface $\protect\eta_{c}(x)$ (Stokes wave).}
\label{fig2}
\end{figure}

\textbf{Main Result.} \emph{In Theorem \ref{Thm3} we prove that at least two
steady waves exist close to the non-degenerate }$S^{1}$\emph{-orbit
(Definition \ref{nondeg}) of a non-constant steady wave when a flat bottom
is perturbed. Consequently, this theorem implies the persistence of at least
two steady waves close to the primary branch of Stokes waves which are
non-degenerate. This phenomenon is illustrated in Figure \ref{fig2}. }

We will briefly discuss the non-degeneracy property of an $S^{1}$-orbit of
the primary branch of Stokes waves. The appearance of a degenerate $S^{1}$%
-orbit in a branch of Stokes waves leads generically to the existence of
bifurcation (Figure \ref{fig2}). For a fluid with infinite depth, the
article \cite{BuDaTo} proves that the Morse index of solutions along the
primary branch of Stokes waves bifurcating from $c_{1}$ diverges as the
branch approaches the extremal wave (conjectured by Stokes). These analytic
methods imply that the branch of Stokes waves, parameterized by $\lambda $,
has a countable number of critical values $\left( \lambda _{j}\right) _{j\in 
\mathbb{N}}$ containing turning points (fold bifurcations)\ or harmonic
bifurcations. The numerical evidence is that only turning points occur.
Therefore, the results in \cite{BuDaTo} imply that the primary branch of
Stokes waves is non-degenerate (in the subspace of even periodic functions)
except for set of critical values $\left( \lambda _{j}\right) _{j\in \mathbb{%
N}}$.

To the best of our knowledge, the bifurcation from the branch of Stokes
waves has not been established rigorously for a fluid with a flat bottom ($%
b=0$). The numerical computations in \cite{CrNi2} indicate that only a
countable set of turning points occur in the primary branch of Stokes waves
arising from $c_{1}$. Given this numerical evidence, we conjecture that the $%
S^{1}$-orbits of the primary branch of Stokes waves, parameterized by $%
\lambda $, is non-degenerate except for a countable set of critical values $%
\left( \lambda _{j}\right) _{j\in \mathbb{N}}$. \emph{If this conjecture is
true, our theorem proves the persistence of at least two steady waves close
to the primary branch of Stokes waves, except for the hypothetical set of
critical values }$\left( \lambda _{j}\right) _{j\in \mathbb{N}}$\emph{. }

A possible approach to prove this conjecture is to extend the methods in 
\cite{BuDaTo} to the Babenko formulation for a fluid with a flat bottom in 
\cite{KuDi}. But this challenging task is beyond the scope of our
presentation. Our main contribution is to present a novel mathematical
framework to prove the persistence of steady waves different to the ones
studied previously from a flat surface. These steady waves can be found
close to the Stokes waves or even its secondary bifurcations. These steady
waves have been observed and described in the formation of dunes (out of
phase waves) and in antidunes (in phase waves).

Theorem \emph{\ref{Thm3}} is proved by applying a Lyapunov-Schmidt reduction
in a Sobolev space of $2\pi $-periodic function to solve the normal
components to a non-degenerate $S^{1}$-orbit. The result is obtained from
the fact that the reduced Hamiltonian (defined in the domain $S^{1}$) has at
least two critical points. The Hamiltonian formulation presented in this
work can be used to study other problems of interest such as the existence
of steady waves in more dimensions. However, for torus domains $\mathbb{T}%
^{n}$ it is necessary to consider surface tension to avoid the small divisor
problem, see \cite{CrNi1} for references. In such a case, one can apply
Lusternik-Schnirelmann category to prove the persistence of at least $n+1$
solutions from a non-degenerate $\mathbb{T}^{n}$-orbit of steady waves
(Stokes waves). Lusternik-Schnirelmann category has been used to prove the
persistence of solutions in finite-dimensional Hamiltonian systems in \cite%
{FoMo}.

The paper is organized as follows. In Section 2, we define the Hamiltonian
using the Dirichlet-Neumann operator. In Section 3, we prove the
continuation of the trivial solution for small $b$. In Section 4, we prove
the persistence of at least two steady waves near a non-degenerate $S^{1}$%
-orbit of steady waves for small $b$.

\section{Formulation for two-dimensional steady water waves}

We study the problem of steady waves in the periodic domain $S^{1}=\mathbb{R}%
/2\pi\mathbb{Z}$. In this domain, we define the Sobolev space of $2\pi$%
-periodic functions, 
\begin{equation*}
H^{s}\left( S^{1};\mathbb{R}\right) =\left\{ u(x)=\sum_{k\in\mathbb{Z}%
}u_{k}e^{ixk}\in L^{2}:\left\vert u\right\vert _{s}^{2}=\sum_{k\in\mathbb{Z}%
}\left( 1+k^{2}\right) ^{s}\left\vert u_{k}\right\vert ^{2}<\infty\right\}
,\qquad s\geq0\text{.}
\end{equation*}
The free surface of the fluid is represented by the curve $y=\eta(x)$, and
the bottom of the fluid by $y=-h+b(x)$, where $b$ is a $2\pi$-periodic
variation from the mean deep $-h$.

In the domain%
\begin{equation}
D_{b,\eta }=\left\{ \left( x,y\right) \in S^{1}\times \mathbb{R}%
:-h+b(x)<y<\eta (x)\right\} \text{,}
\end{equation}%
the Hamiltonian for the time-dependent Euler equation is given by $H=K+E$,
where the kinetic and potential energy are 
\begin{equation*}
K=\int_{S^{1}}\left( \int_{-h+b}^{\eta }\frac{1}{2}\left\vert \nabla \phi
\right\vert ^{2}dy\right) dx,\qquad E=\int_{S^{1}}\frac{1}{2}g\eta ^{2}~dx%
\text{.}
\end{equation*}%
According to \cite{Wi}, the requirement that the functional $H$ is
stationary with respect to independent variations $\delta \eta $ and $\delta
\phi $ gives the set of equations%
\begin{eqnarray*}
\Delta \phi &=&0\text{ in }D_{b,\eta }\text{,} \\
\frac{\partial \phi }{\partial n} &=&n\cdot \nabla \phi =0\text{ in }%
\partial D_{b,\eta }\text{,}
\end{eqnarray*}%
and the kinematic boundary condition and Bernoulli condition at $y=\eta (x)$%
, 
\begin{eqnarray*}
\partial _{t}\eta &=&\partial _{y}\phi -\partial _{x}\eta \cdot \partial
_{x}\phi , \\
\partial _{t}\phi &=&-\frac{1}{2}\left\vert \nabla \phi \right\vert
^{2}-g\eta ,
\end{eqnarray*}%
see also \cite{Cr} for details.

The Zakharov formulation assigns a Hamiltonian structure to the dynamics of
waves. Zakharov discovered in \cite{Za} that a subtle aspect is the choice
of the canonical variables $\xi (x,t)=\phi (x,\eta (x,t),t)$ and $\eta (x,t)$
for the phase space of the time-dependent Euler equation satisfying the
kinematic boundary condition $\partial _{t}\eta =\delta _{\xi }H$ and the
Bernoulli condition $\partial _{t}\xi =-\delta _{\eta }H$. In particular,
the steady waves are critical points of the Hamiltonian $H(\eta ,\xi )$ in
the domain $D_{b,\eta }$.

Furthermore, if we look for critical points of $H(\eta ,\xi )$ restricted by
the constraint of zero excess of mass 
\begin{equation*}
m(\eta ):=\int_{S^{1}}\eta (x)=0\in \mathbb{R}\text{,}
\end{equation*}%
then these critical points satisfy instead the equations $\delta _{\xi }H=0$
and $\delta _{\eta }H+\lambda \delta _{\eta }m(\eta )=0$, where $\lambda $
is the Lagrange multiplier. Since $\delta _{\eta }m(\eta )=1$, the second
equation is up to a constant the Bernoulli condition 
\begin{equation*}
\delta _{\eta }H+\lambda =\frac{1}{2}\left\vert \nabla \phi \right\vert
^{2}+g\eta +\lambda =0.
\end{equation*}%
Hereafter, we study the problem of steady waves as critical points of the
Hamiltonian $H(\eta ,\xi )$ restricted to the space of functions with zero
excess of mass $\eta \in H_{0}^{s+1}$, where%
\begin{equation*}
H_{0}^{s}=\left\{ \eta \in H^{s}:\int_{S^{1}}\eta (x)~dx=0\right\} ,\qquad
s\geq 0.
\end{equation*}

\subsection{Dirichlet-Neumann operator}

To obtain an expression for the Hamiltonian $H(\eta ,\xi )$ we require the
Dirichlet-Neumann operator. Hereafter the symbols $N_{\eta }\cdot \nabla $
and $N_{b}\cdot \nabla $ represent the external normal derivatives (not
normalized) at the top and bottom of the domain $D_{b,\eta }$,%
\begin{equation*}
N_{\eta }\cdot \nabla =\partial _{y}-\partial _{x}\eta \partial _{x},\qquad
N_{b}\cdot \nabla =-\partial _{y}+\partial _{x}b\partial _{x}~.
\end{equation*}

\begin{definition}
\label{DN}The Dirichlet-Neumann operator is defined by%
\begin{equation*}
G(\eta ;b)\xi =\left. N_{\eta }\cdot \nabla \Phi \right\vert _{y=\eta (x)},
\end{equation*}%
where $\Phi $ is the unique harmonic function in $D_{b,\eta }$ that
satisfies the boundary condition $\xi (x)=\Phi (x,\eta (x))$ at the top $%
y=\eta (x)$ and $N_{b}\cdot \nabla \Phi =0$ at the bottom $y=-h+b(x)$.
\end{definition}

By the continuous embedding $H^{s+1}\hookrightarrow C^{1}(S^{1};\mathbb{R})$
with $s>1/2$, there is a constant $\gamma $ such that $\left\vert \cdot
\right\vert _{C^{1}}\leq \gamma \left\vert \cdot \right\vert _{H^{s+1}}$.
Therefore $\left\vert b\right\vert _{C^{0}}\leq \gamma \varepsilon $ if $%
\left\vert b\right\vert _{s+1}\leq \varepsilon $. The condition $\left\vert
b\right\vert _{s+1}\leq \varepsilon $ and $\eta \geq -h+2\gamma \varepsilon $
imply that the bottom $y=-h+b(x)$ and the top $y=\eta (x)$ remain at a $%
\gamma \varepsilon $-distance,%
\begin{equation}
\left\vert \eta -\left( -h+b\right) \right\vert _{C^{0}}\geq \left\vert
h+\eta \right\vert _{C^{0}}-\left\vert b\right\vert _{C^{0}}\geq \gamma
\varepsilon \text{.}  \label{dist}
\end{equation}

\begin{proposition}
By Theorem A.11 in \cite{La}, the Dirichlet-Neumann operator 
\begin{equation*}
G(\eta ;b):H^{s+1}\rightarrow H^{s},\qquad s>1/2,
\end{equation*}%
is analytic as a function of $(\eta ;b)\in H_{0}^{s+1}\times H^{s+1}$ if $%
\left\vert b\right\vert _{s+1}\leq \varepsilon $ with $s>1/2$ and $\eta
(x)>-h+2\gamma \varepsilon $ for $x\in S^{1}$.
\end{proposition}

\subsection{The current of mean velocity $c$}

We define the cylinder-like domain 
\begin{equation}
D_{b}=\left\{ \left( x,y\right) \in S^{1}\times \mathbb{R}:-h+b(x)<y\right\}
.
\end{equation}%
Notation $b\lesssim a$ means that there is a positive constant $C$ such that 
$b\leq Ca$ for all $a>0$ sufficiently small.

In section 2.4 we prove the following estimate for the harmonic function $%
\Phi _{b}$.

\begin{theorem}
\label{ThmA}There is an $\varepsilon >0$ such that if $\left\vert
b\right\vert _{s+1}\leq \varepsilon $ with $s>1/2$, then there is a unique
harmonic function $\Phi _{b}\left( x,y\right) :D_{b}\rightarrow \mathbb{R}$
with boundary conditions 
\begin{equation}
\lim_{y\rightarrow \infty }\nabla \Phi _{b}(x,y)=0,\qquad \lim_{y\rightarrow
\infty }\Phi _{b}(x,y)=0,  \label{Harm1}
\end{equation}%
and 
\begin{equation}
N_{b}\cdot \nabla \left( \Phi _{b}+x\right) =0~,\qquad y=-h+b(x).
\label{Harm2}
\end{equation}%
Furthermore, the function $\Phi _{b}\left( x,y\right) $ satisfies the
estimate 
\begin{equation}
\left\vert \Phi _{b}\right\vert _{C^{k}(\bar{D})}\lesssim \left\vert
b\right\vert _{s+1}~,\qquad k\in \mathbb{N}\text{,}
\end{equation}%
where $\bar{D}\subset D_{2\gamma \varepsilon }$ is a compact set.
\end{theorem}

If $b\in H^{s+1}$ with $s>1/2$, we define $\nabla \Psi _{c}=c(x+\Phi _{b})$
as the current of mean velocity $c$. The function $\Psi _{c}$ is the unique
harmonic function in $D_{b}$ with boundary conditions%
\begin{equation*}
\lim_{y\rightarrow \infty }\nabla \Psi _{c}(x,y)=(c,0),\qquad
\lim_{y\rightarrow \infty }\Psi _{c}(x,y)=cx,
\end{equation*}%
and zero Neumann boundary condition at the bottom 
\begin{equation*}
N_{b}\cdot \nabla \Psi _{c}=0~,\qquad y=-h+b.
\end{equation*}

\begin{remark}
Actually, the harmonic function $\Psi _{1}=x+\Phi _{b}$ generates the De
Rham $1$-cohomology group $H^{1}(D_{b})=\mathbb{R}$ in the cylindrical
domain $D_{b}$. Indeed, $H^{1}(D_{b})$ is generated by the form $\Omega
=-\partial _{y}\Psi _{1}dx+\partial _{x}\Psi _{1}dy$ that is closed, $%
d\Omega =\Delta \Psi _{1}dx\wedge dy=0$, but not exact. The Hodge-Helmholtz
decomposition \cite{Marsden} implies that any vector field $u$ defined in
the domain $D_{b}$ with the boundary conditions $\lim_{y\rightarrow \infty
}u=(c,0)$ and $N_{b}\cdot \nabla u=0$ at $y=-h+b(x)$ is decomposed into
three components: a divergence-free (incompressible), a rotation-free
(irrotational), and a harmonic (translational) component. The component $%
\nabla \Psi _{1}$ is harmonic in the sense that it is divergence-free $%
\nabla \cdot \nabla \Psi _{1}=\Delta \Psi _{1}=0$ and curl-free $\nabla
^{\perp }\cdot \nabla \Psi _{1}=0$, where $\nabla ^{\perp }=(-\partial
_{y},\partial _{x})$. For more dimensions, the domain $D_{b}=\left\{ \left(
x,y\right) \in \mathbb{T}^{n}\times \mathbb{R}:-h+b(x)<y\right\} $ has $n$
translational fields $\nabla \Psi _{j}$ corresponding to the De Rham $1$%
-cohomology $H^{1}(D_{b})=\mathbb{R}^{n}$. Our analysis can be extended to
torus domains of general dimension $\mathbb{T}^{n}$ by considering these $n$
fields $\nabla \Psi _{j}$ that represent the currents in the different
directions of the domain $D_{b}$.
\end{remark}

\subsection{The gradient of the Hamiltonian}

The kinetic energy for $\phi =\Phi +\Psi _{c}$ (where $\Phi $ is a
perturbation of the current generated by $\Psi _{c}$) is given by 
\begin{equation*}
K=\int_{S^{1}}\int_{-h+b}^{\eta }\frac{1}{2}\left\vert \nabla \phi
\right\vert ^{2}dy~dx=\int_{S^{1}}\int_{-h+b}^{\eta }\frac{1}{2}\left\vert
\nabla \Phi +\nabla \Psi _{c}\right\vert ^{2}dy~dx.
\end{equation*}%
We are ready to establish the formulation of the kinetic energy $K$ in terms
of the variables $(\eta ,\xi )\in H_{0}^{s+1}\times H^{s+1}$ and the
parameters $b\in H^{s+1}$ and $c\in \mathbb{R}$.

\begin{proposition}
If $\eta $ has zero-average, then the kinetic energy in terms of the
variables $\eta (x)$ and $\xi (x)=\Phi (x,\eta (x))$, where $\Phi $ is the
harmonic function in Definition \ref{DN}, is given by%
\begin{equation*}
K=\int_{S^{1}}\left( \frac{1}{2}\xi G(\eta ;b)\xi +c\xi \left( N_{\eta
}\cdot \nabla \Phi _{b}-\partial _{x}\eta \right) +\frac{c^{2}}{2}\eta +%
\frac{c^{2}}{2}\Phi _{b}\left( N_{\eta }\cdot \nabla \Phi _{b}-2\partial
_{x}\eta \right) \right) dx+C,
\end{equation*}%
where $\Phi _{b}$ and $N_{\eta }\cdot \nabla \Phi _{b}$ are functions
evaluated at $y=\eta (x)$ and $C$ is a constant depending on $b\in H^{s+1}$
and $c\in \mathbb{R}$, but independent of $(\eta ,\xi )$.
\end{proposition}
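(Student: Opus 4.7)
The plan is to expand
$|\nabla\tilde{\Phi}|^{2}=|\nabla\Phi|^{2}+2\nabla\Phi\cdot\nabla\Psi_{c}+|\nabla\Psi_{c}|^{2}$
and reduce each of the three resulting integrals to a boundary integral at the free surface $y=\eta$ plus a term depending only on $(b,c)$ that is absorbed into $C(b,c)$. The central tools are Green's identity (using that $\Phi$ and $\Phi_{b}$ are harmonic in $D$), the boundary conditions $N_{b}\cdot\nabla\Phi=0$ and $N_{b}\cdot\nabla\Phi_{b}=-\partial_{x}b$ (the latter following from $N_{b}\cdot\nabla(\Phi_{b}+x)=0$), the identity $N_{\eta}\cdot\nabla(\Phi_{b}+x)=N_{\eta}\cdot\nabla\Phi_{b}-\partial_{x}\eta$, and Leibniz's rule for differentiating a vertical integral with $x$-dependent endpoints.

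For the pure $\Phi$ term, Green's identity applied to $\Phi\nabla\Phi$ gives only a top contribution $\frac{1}{2}\int_{S^{1}}\xi\,G(\eta;b)\xi\,dx$, since the bottom piece vanishes by the Neumann condition on $\Phi$; this is the standard Zakharov identity. For the cross term, I would split $\nabla\Psi_{c}=c\nabla\Phi_{b}+c(1,0)$ and write the integral as $c\int_{D}\nabla\Phi\cdot\nabla\Phi_{b}\,dA+c\int_{D}\partial_{x}\Phi\,dA$. Green's identity on the first piece produces a top contribution $c\int_{S^{1}}\xi(N_{\eta}\cdot\nabla\Phi_{b})|_{y=\eta}\,dx$ and a bottom contribution $-c\int_{S^{1}}\Phi|_{y=-h+b}\,\partial_{x}b\,dx$; Leibniz's rule on the second piece, together with the vanishing of $\int_{S^{1}}\partial_{x}(\cdot)\,dx$ by periodicity, produces a top contribution $-c\int_{S^{1}}\xi\,\partial_{x}\eta\,dx$ and a bottom contribution $+c\int_{S^{1}}\Phi|_{y=-h+b}\,\partial_{x}b\,dx$. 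The two unknown bottom traces of $\Phi$ cancel, yielding the cross term of the claimed formula.

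For the third term I would expand $|\nabla\Psi_{c}|^{2}=c^{2}|\nabla\Phi_{b}|^{2}+2c^{2}\partial_{x}\Phi_{b}+c^{2}$ and treat the three pieces separately. The constant $c^{2}$ integrates to $c^{2}\cdot\mathrm{Area}(D)=2\pi h c^{2}$ by the zero-average hypothesis on $\eta$ and $b$, which is independent of $(\eta,\xi)$. Green's identity applied to $\Phi_{b}\nabla\Phi_{b}$ produces the top contribution $\frac{c^{2}}{2}\int_{S^{1}}\Phi_{b}(N_{\eta}\cdot\nabla\Phi_{b})|_{y=\eta}\,dx$ plus a bottom contribution that depends only on $b$. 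Leibniz's rule on $\int_{D}\partial_{x}\Phi_{b}\,dA$ yields $-c^{2}\int_{S^{1}}\Phi_{b}|_{y=\eta}\,\partial_{x}\eta\,dx$ plus another $b$-only boundary term. Summing these three pieces produces $\int_{S^{1}}\frac{c^{2}}{2}\Phi_{b}(N_{\eta}\cdot\nabla\Phi_{b}-2\partial_{x}\eta)|_{y=\eta}\,dx$, finishing the derivation once the $b$-only pieces are lumped into $C(b,c)$.

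The main obstacle is not conceptual but careful bookkeeping: the non-periodic factor $x$ inside $\Psi_{c}=c(\Phi_{b}+x)$ prevents a direct application of the divergence theorem to the full field $\nabla\Psi_{c}$, forcing one to split off each $\partial_{x}$ piece and handle it by Leibniz's rule. One must then verify that the several unknown bottom traces of $\Phi$ cancel exactly, while the various $b$-only boundary terms (including the area piece that requires the zero-average assumption) combine consistently into the single constant $C(b,c)$ appearing in the statement.
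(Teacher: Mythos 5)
Your proof is correct, and the basic mechanism — reduce each piece of $\int_D\frac{1}{2}|\nabla\tilde\Phi|^2$ to boundary integrals using harmonicity, and sweep the $(\eta,\xi)$-independent leftovers into $C$ — is the same as the paper's. What differs is the bookkeeping of the non-periodic piece of $\Psi_c$. You split $\nabla\Psi_c$ into $c\nabla\Phi_b + c(1,0)$ at the outset and then handle the resulting $\int_D\partial_x\Phi\,dA$ and $\int_D\partial_x\Phi_b\,dA$ with Leibniz's rule, which forces you to verify that the unknown bottom traces of $\Phi$ cancel. The paper instead applies the divergence theorem directly to the vector fields $\Phi\nabla(\Phi+2\Psi_c)$ and $\Phi_b\nabla(\Phi_b+2x)$; these are already $2\pi$-periodic because $\nabla\Psi_c$ and $\nabla x$ are, so the lateral contributions cancel with no extra work, and the bottom contribution in the first case vanishes outright (since both $\Phi$ and $\Psi_c$ satisfy the Neumann condition at the bottom) rather than cancelling against a Leibniz term. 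Your route is more explicit and makes the cancellation structure visible, at the price of more terms to track; the paper's grouping is a bit slicker and never produces an unknown bottom trace of $\Phi$. One small slip on your end: the constant piece of $\frac{1}{2}|\nabla\Psi_c|^2$ is $\frac{c^2}{2}$, not $c^2$, so the area term is $\pi h c^2$ rather than $2\pi h c^2$ — harmless here since it is swallowed by $C(b,c)$, but worth fixing if you wanted the explicit constant the paper records, $C=\pi h c^2 + \frac{c^2}{2}\int_{S^1}\partial_x b\,\Phi_b(x,-h+b(x))\,dx$.
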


\begin{proof}
The kinetic energy is%
\begin{equation*}
K=\int_{D_{b,\eta }}\frac{1}{2}\nabla \Phi \cdot \nabla \left( \Phi +2\Psi
_{c}\right) +\int_{D_{b,\eta }}\frac{1}{2}\left\vert \nabla \Psi
_{c}\right\vert ^{2}~.
\end{equation*}%
Since $\Phi $ and $\Psi _{c}$ are harmonic functions, by the Divergence
Theorem we have%
\begin{equation*}
K=\int_{\partial D_{b,\eta }}\frac{1}{2}\Phi \nabla \left( \Phi +2\Psi
_{c}\right) \cdot N~dS+\int_{D_{b,\eta }}\frac{1}{2}\left\vert \nabla \Psi
_{c}\right\vert ^{2}\text{,}
\end{equation*}%
where $N$ is the normal of $\partial D_{b,\eta }$. Since $\nabla \left( \Phi
+2\Psi _{c}\right) $ is $2\pi $-periodic, then 
\begin{equation*}
K=\frac{1}{2}\int_{S^{1}}\left. \Phi N_{\eta }\cdot \nabla \left( \Phi
+2\Psi _{c}\right) \right\vert _{y=\eta }+\left. \Phi N_{b}\cdot \nabla
\left( \Phi +2\Psi _{c}\right) \right\vert _{y=-h+b}dx+\int_{D_{b,\eta }}%
\frac{1}{2}\left\vert \nabla \Psi _{c}\right\vert ^{2}\text{.}
\end{equation*}

Given that $\Phi $ and $\Psi _{c}$ satisfy the Neumann boundary conditions
at the bottom, while $N_{\eta }\cdot \nabla \Phi =G(\eta ;b)\xi $ and $%
N_{\eta }\cdot \nabla \Psi _{c}=c(N_{\eta }\cdot \nabla \Phi _{b}-\partial
_{x}\eta )$ at the top $y=\eta (x)$, the kinetic energy becomes%
\begin{equation*}
K=\int_{S^{1}}\frac{1}{2}\xi G(\eta ;b)\xi +c\xi \left( \left( N_{\eta
}\cdot \nabla \Phi _{b}\right) _{y=\eta }-\partial _{x}\eta \right)
~dx+\int_{D_{b,\eta }}\frac{1}{2}\left\vert \nabla \Psi _{c}\right\vert ^{2},
\end{equation*}%
where%
\begin{equation*}
\int_{D_{b,\eta }}\frac{1}{2}\left\vert \nabla \Psi _{c}\right\vert
^{2}=\int_{D_{b,\eta }}\frac{1}{2}c^{2}\left\vert \nabla (\Phi
_{b}+x)\right\vert ^{2}=\frac{c^{2}}{2}\int_{D_{b,\eta }}\left( \nabla \Phi
_{b}\cdot \nabla \left( \Phi _{b}+2x\right) +1\right) .
\end{equation*}%
Since $\eta $ has zero average, then 
\begin{equation*}
\int_{D_{b,\eta }}1=\int_{S^{1}}(\eta (x)+h-b(x))~dx=2\pi \left(
h-\int_{S^{1}}b\right) .
\end{equation*}%
Thus, by the Divergence Theorem, and the fact that $\nabla \left( \Phi
_{b}+x\right) _{y=-h+b}=0$, we have%
\begin{equation*}
\int_{D_{b,\eta }}\frac{1}{2}\left\vert \nabla \Psi _{c}\right\vert ^{2}=%
\frac{c^{2}}{2}\int_{S^{1}}\left( \eta (x)+\left( \Phi _{b}~N_{\eta }\cdot
\nabla \left( \Phi _{b}+2x\right) \right) _{y=\eta }~+\left( \Phi
_{b}~\partial _{x}b\right) _{y=-h+b}\right) dx+\pi hc^{2}.
\end{equation*}%
The result follows with the constant 
\begin{equation*}
C:=\pi c^{2}\left( h-\int_{S^{1}}b\right) +\frac{c^{2}}{2}%
\int_{S^{1}}\partial _{x}b(x)\Phi _{b}(x,-h+b(x))dx\text{.}
\end{equation*}
\end{proof}

\begin{corollary}
\label{Co1}By dropping the constant $C$, the Hamiltonian in $(\eta ,\xi )$%
-coordinates is given by%
\begin{equation}
H(\eta ,\xi ;b,c)=K(\eta ,\xi )+E(\eta )=\hat{H}(\eta ,\xi )+\tilde{H}(\eta
,\xi )\text{.}
\end{equation}%
Here, 
\begin{align*}
\hat{H}(\eta ,\xi ;b,c)& =\int_{S^{1}}\left( \frac{1}{2}\xi G(\eta ;b)\xi
-c\xi \partial _{x}\eta +\frac{1}{2}g\eta ^{2}\right) ~dx, \\
\tilde{H}(\eta ,\xi ;b,c)& =\int_{S^{1}}\left( \frac{c^{2}}{2}\eta +c\xi
N_{\eta }\cdot \nabla \Phi _{b}+\frac{c^{2}}{2}\Phi _{b}N_{\eta }\cdot
\nabla \Phi _{b}-c^{2}\Phi _{b}\partial _{x}\eta \right) ~dx,
\end{align*}%
where $\Phi _{b}$ and $N_{\eta }\cdot \nabla \Phi _{b}$ are functions
evaluated at $y=\eta (x)$.
\end{corollary}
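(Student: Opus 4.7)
The plan is to treat Corollary \ref{Co1} as a bookkeeping consequence of the preceding proposition rather than a new analytical result. First I would take the expression
\[
K=\int_{S^{1}}\left(\tfrac{1}{2}\xi G(\eta;b)\xi+c\xi(N_{\eta}\cdot\nabla\Phi_{b}-\partial_{x}\eta)+\tfrac{c^{2}}{2}\Phi_{b}(N_{\eta}\cdot\nabla\Phi_{b}-2\partial_{x}\eta)\right)dx+C
\]
from the proposition above and distribute the coefficients $c$ and $c^{2}/2$ so that $K$ becomes a sum of five separately integrable terms plus the constant $C(b,c)$.

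Next I would add the potential energy $E(\eta)=\int_{S^{1}}\tfrac{1}{2}g\eta^{2}\,dx$ and sort the resulting integrand into two groups according to whether or not the integrand involves $\Phi_{b}$. The group $\tfrac{1}{2}\xi G(\eta;b)\xi-c\xi\partial_{x}\eta+\tfrac{1}{2}g\eta^{2}$ is exactly $\hat{H}$; the remaining three terms $c\xi N_{\eta}\cdot\nabla\Phi_{b}+\tfrac{c^{2}}{2}\Phi_{b}N_{\eta}\cdot\nabla\Phi_{b}-c^{2}\Phi_{b}\partial_{x}\eta$ are exactly $\tilde{H}$. No further manipulation is required.

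Finally I would justify discarding the constant $C(b,c)=\pi h c^{2}+\tfrac{c^{2}}{2}\int_{S^{1}}\partial_{x}b(x)\Phi_{b}(x,-h+b(x))\,dx$. Since $C$ depends only on the parameters $(b,c)$ and is independent of the dynamical variables $(\eta,\xi)$, it contributes nothing to the gradient $\nabla_{(\eta,\xi)}H$, so removing it preserves the critical-point equations that characterise steady waves. Because every step is algebraic regrouping, there is essentially no analytic obstacle here; the only point requiring a sentence of justification is precisely this invariance of the variational problem under additive constants in the parameters.
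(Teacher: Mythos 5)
Your proposal is correct and matches the paper's (implicit) derivation: the corollary is just an algebraic regrouping of the kinetic-energy formula from the preceding proposition together with $E(\eta)$, split into the $\Phi_{b}$-free part $\hat H$ and the $\Phi_{b}$-dependent part $\tilde H$, with the $(\eta,\xi)$-independent constant $C(b,c)$ discarded because it does not affect $\nabla_{(\eta,\xi)}H$. The paper states this corollary without a separate proof, and your observation about why dropping $C$ is harmless is exactly the right justifying remark.
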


For a flat bottom with $b=0$ we have that $\tilde{H}(\eta ,\xi ;0,c)=0$,
then the Hamiltonian becomes%
\begin{equation*}
H(\eta ,\xi ;0,c)=\hat{H}(\eta ,\xi ;0,c)=\int_{S^{1}}\left( \frac{1}{2}\xi
G(\eta ;0)\xi -c\xi \partial _{x}\eta +\frac{1}{2}g\eta ^{2}\right) dx.
\end{equation*}%
Therefore, for a flat bottom the Hamiltonian $H(\eta ,\xi ;0,c)$ is the same
Hamiltonian that is used in \cite{CrNi1} to study the existence of traveling
waves arising from the trivial solution at speed $c_{k}$ (Stokes waves).

\begin{remark}
It is important to mention that the Euler equation is equivalent to the
Hamiltonian system $\dot{\eta}=\partial _{\xi }H(u;b,c)$ and $\dot{\xi}%
=-\partial _{\eta }H(\eta ,\xi ;b,c)$ for the flat bottom $b=0$, see \cite%
{CrNi1} for details. This is not true for $b\neq 0$ because $\eta $ and $\xi 
$ are not conjugated variables anymore. The conjugated variables can be
obtained using the Legendre transformation in the Lagrangian $L=K-E$.
Nevertheless, the solutions of $\nabla _{(\eta ,\xi )}H(\eta ,\xi ;b,c)=0$
are steady solutions of the Euler equation.
\end{remark}

In next proposition we define the space where the gradient of the
Hamiltonian is well defined.

\begin{proposition}
The gradient map%
\begin{equation*}
\nabla _{(\eta ,\xi )}H(\eta ,\xi ;b,c):H_{0}^{s+1}\times H_{0}^{s+1}\times
H^{s+1}\times \mathbb{R}\rightarrow H_{0}^{s}\times H_{0}^{s}
\end{equation*}%
is well defined if $\left\vert b\right\vert _{s+1}\leq \varepsilon $ with $%
s>1/2$ and $\eta >-h+2\gamma \varepsilon $.
\end{proposition}

\begin{proof}
By Corollary \ref{Co1} and the computations in reference \cite{CrSu2}, we
have that 
\begin{equation*}
\nabla _{(\eta ,\xi )}\hat{H}(\eta ,\xi ;b,c)=\left( 
\begin{array}{c}
c\partial _{x}\xi +g\eta +\frac{1}{2}\left\vert \partial _{x}\xi \right\vert
^{2}-\frac{\left( G(\eta ;b)\xi +\partial _{x}\eta \partial _{x}\xi \right)
^{2}}{2(1+\left( \partial _{x}\eta \right) ^{2})^{2}} \\ 
-c\partial _{x}\eta +G(\eta ;b)\xi%
\end{array}%
\right) .
\end{equation*}%
We can compute the partial derivative 
\begin{equation*}
\partial _{\xi }\tilde{H}(\eta ,\xi ;b,c)=c\left( \partial _{y}\Phi
_{b}(x,\eta )-\partial _{x}\eta \partial _{x}\Phi _{b}(x,\eta )\right) .
\end{equation*}%
Moreover, the partial derivative $\partial _{\eta }\tilde{H}(\eta ,\xi ;b,c)$
is a sum of products of $\xi $, $\eta $, $\partial _{x}\eta $, $\partial
_{x}\xi $ and $\Phi _{b}$ and its derivatives evaluated at $y=\eta (x)$.

By Section 2.1. the Dirichlet--Neumann operator $G(\eta ;b)$ is bounded from 
$H^{s+1}$ to $H^{s}$ because $\left\vert b\right\vert _{s+1}\leq \varepsilon 
$ with $s>1/2$ and $\eta >-h+2\gamma \varepsilon $. The analytic function $%
\Phi _{b}(x,y)$ satisfies the estimate of Theorem \ref{ThmA} at $y=\eta (x)$
because $\eta (x)>-h+2\gamma \varepsilon $ for any $x\in S^{1}$. Therefore,
by the Banach Algebra property of $H^{s}$ for $s>1/2$ and the analyticity of 
$\Phi _{b}$ at $y=\eta (x)$, the gradient operator 
\begin{equation*}
\nabla _{(\eta ,\xi )}H(\eta ,\xi ;b,c):H_{0}^{s+1}\times H_{0}^{s+1}\times
H^{s+1}\times \mathbb{R}\rightarrow H_{0}^{s}\times H^{s},
\end{equation*}%
is well defined. It only remains to prove that $\partial _{\xi }H\in
H_{0}^{s}$. This fact follows from the Divergence Theorem, 
\begin{align*}
\int_{S^{1}}\partial _{\xi }H~dx& =\int_{S^{1}}\partial _{\xi
}K~dx=\int_{S^{1}}G(\eta ;b)\xi +c\left( N_{\eta }\cdot \nabla \Phi
_{b}-\partial _{x}\eta \right) _{y=\eta }~dx \\
& =\int_{S^{1}}\left( \nabla \Phi +\nabla \Psi _{c}\right) _{y=\eta }\cdot
N_{\eta }~dx=\int_{D_{b,\eta }}\Delta \left( \Phi +\Psi _{c}\right) =0\text{.%
}
\end{align*}
\end{proof}

\subsection{Estimates of the current of mean velocity $c$}

\begin{proof}[Proof of Theorem \protect\ref{ThmA}]
We obtain the estimates for the function $\Phi _{b}$ using the methods
developed in \cite{CrGu} by means of the Green function $G$ in $D_{0}$. The
fundamental solution of Laplace equation in the domain $(x,y)\in S^{1}\times 
\mathbb{R}$ is given by the Green function%
\begin{equation*}
G_{per}(x,y)=\frac{1}{4\pi }\ln \left( \sin ^{2}x+\sinh ^{2}y\right) \text{.}
\end{equation*}%
Let $y^{\ast }=-y-2h$ be the image of $y$ reflected at the line $y=-h$. By
the method of images, the Green function in the domain $D_{0}$ satisfying
the Neumann boundary condition at the bottom $y=-h$ is given by 
\begin{align*}
G(x-x^{\prime },y,y^{\prime })& =\frac{1}{4\pi }\ln \left( \sin
^{2}(x-x^{\prime })+\sinh ^{2}(y-y^{\prime })\right) \\
& +\frac{1}{4\pi }\ln \left( \sin ^{2}(x-x^{\prime })+\sinh ^{2}(y+y^{\prime
}+2h)\right) .
\end{align*}%
The Green function $G$ is defined up to a constant. We normalized $G$ by the
condition that 
\begin{equation}
\lim_{y\rightarrow +\infty }G(x-x^{\prime },y,y^{\prime })=0\text{.}
\label{B1}
\end{equation}%
Notice that by construction, the Green function $G(x-x^{\prime },y,y^{\prime
})$ is analytic except for singularities at $x^{\prime }=x$ and $y^{\prime
}=y$ or $y^{\prime }=2h-y$ (the reflection of $y$ at the line $y=-h$).

We use the Green function $G$ to obtain the estimates of the harmonic
function $\Phi _{b}$ satisfying the boundary conditions (\ref{Harm1}) and (%
\ref{Harm2}). Explicitly, the boundary condition (\ref{Harm2}) is 
\begin{equation*}
N_{b}\cdot \nabla \Phi _{b}(x,-h+b(x))=-\partial _{x}b(x).
\end{equation*}%
Using (\ref{B1}) and the fact that $\Phi _{b}$ and $G$ are $2\pi $-periodic,
we have by Green's identity that 
\begin{align}
\Phi _{b}(x,y)& ={\scriptstyle\int_{D_{b}}\Phi _{b}(x^{\prime },y^{\prime
})\Delta _{(x^{\prime },y^{\prime })}G(x-x^{\prime },y,y^{\prime
})-G(x-x^{\prime },y,y^{\prime })\Delta _{(x^{\prime },y^{\prime })}\Phi
_{b}(x^{\prime },y^{\prime })\label{green}} \\
& ={\scriptstyle\int_{S^{1}}\left[ \Phi _{b}(x^{\prime },y^{\prime
})N_{b}\cdot \nabla _{(x^{\prime },y^{\prime })}G(x-x^{\prime },y,y^{\prime
})-G(x-x^{\prime },y,y^{\prime })N_{b}\cdot \nabla \Phi _{b}(x^{\prime
},y^{\prime })\right] _{y^{\prime }=-h+b(x^{\prime })}~}dx^{\prime }\text{.}
\notag
\end{align}%
This Green identity reads 
\begin{equation*}
\Phi _{b}=-\mathcal{A}[b]+\mathcal{B}[b]\Phi _{b},
\end{equation*}%
where 
\begin{equation}
\mathcal{A}[b](x,y)=\int_{S^{1}}G(x-x^{\prime },y,-h+b(x^{\prime }))\partial
_{x^{\prime }}b(x^{\prime })~dx^{\prime }\text{,}  \label{comp1}
\end{equation}%
and 
\begin{equation*}
\mathcal{B}[b]\Phi _{b}(x,y)=\int_{S^{1}}N_{b}\cdot \nabla _{(x^{\prime
},y^{\prime })}G(x-x^{\prime },y,-h+b(x^{\prime }))\Phi _{b}(x^{\prime
},-h+b(x^{\prime }))~dx^{\prime }.
\end{equation*}

We have explicitly that the Green function is 
\begin{align*}
G(x-x^{\prime },y,-h+b(x^{\prime }))& =\frac{1}{4\pi }\ln \left( \sin
^{2}(x-x^{\prime })+\sinh ^{2}(y+h-b(x^{\prime }))\right) \\
& +\frac{1}{4\pi }\ln \left( \sin ^{2}(x-x^{\prime })+\sinh
^{2}(y+h+b(x^{\prime })\right) ,
\end{align*}%
and the normal derivative of the Green function is 
\begin{align}
N_{b}\cdot \nabla _{(x^{\prime },y^{\prime })}G(x-x^{\prime
},y,-h+b(x^{\prime }))& ={\scriptstyle-\frac{1}{2\pi }\frac{b^{\prime
}(x^{\prime })\sin 2\left( x-x^{\prime }\right) }{\cosh 2\left( b(x^{\prime
})-h-y\right) -\cos 2\left( x-x^{\prime }\right) }-\frac{1}{2\pi }\frac{%
b^{\prime }(x^{\prime })\sin 2\left( x-x^{\prime }\right) }{\cosh 2\left(
b(x^{\prime })+h+y\right) -\cos 2\left( x-x^{\prime }\right) }}  \notag \\
& {\scriptstyle-\frac{1}{2\pi }\frac{\sinh 2\left( b(x^{\prime })-h-y\right) 
}{\cosh \left( b(x^{\prime })-h-y\right) -\cos \left( x-x^{\prime }\right) }-%
\frac{1}{2\pi }\frac{\sinh \left( b(x^{\prime })+h+y\right) }{\cosh \left(
b(x^{\prime })+h+y\right) -\cos \left( x-x^{\prime }\right) }}\text{.} 
\notag
\end{align}%
Since $b\in C^{1}$, the function 
\begin{equation*}
N_{b}\cdot \nabla _{(x^{\prime },y^{\prime })}G(x-x^{\prime
},y,-h+b(x^{\prime }))_{y=-h+b(x)}
\end{equation*}%
has a singularity of order $\frac{1}{x^{\prime }-x}$. Notice that $\mathcal{B%
}[b]\Phi _{b}^{\ast }$ is defined by an integral that depends only on the
values of $\Phi _{b}^{\ast }:\partial D_{b}\rightarrow \mathbb{R}$ at the
bottom $\partial D_{b}=\left\{ \left( x,-h+b(x)\right) :x\in S^{1}\right\} $%
. Therefore, by applying Korn-Lichtenstein theorem to principal value of the
integral $\mathcal{B}[b]\Phi _{b}^{\ast }$, we obtain that 
\begin{equation*}
\mathcal{B}[b]\Phi _{b}^{\ast }:Z\rightarrow Z,
\end{equation*}%
where $Z=C^{0,\alpha }(\partial D_{b})$ is the space of Holder continuous
functions in $\partial D_{b}$.

Furthermore, we used the Green function $G(x-x^{\prime },y,y^{\prime })$
with Neumann boundary condition at $y=-h$ to have%
\begin{equation*}
N_{b}\cdot \nabla _{(x^{\prime },y^{\prime })}G(x-x^{\prime
},y,-h+b(x^{\prime }))|_{b=0}=0.
\end{equation*}%
This fact implies that the operator norm of $\mathcal{B}[b]$ is bounded as
follows, 
\begin{equation*}
\left\Vert \mathcal{B}[b]\right\Vert _{Z\rightarrow Z}\lesssim \left\vert
b\right\vert _{C^{1}}\lesssim \left\vert b\right\vert _{s+1}\text{.}
\end{equation*}%
On the other hand, the function $G(x-x^{\prime },y,-h+b(x^{\prime
}))_{y=-h+b(x)}$ has a logarithmic singularity at $x^{\prime }=x$, which
implies that the function in definition (\ref{comp1}) is integrable in the
classical sense and $\left\vert \mathcal{A}[b]\right\vert _{Z}\lesssim
\left\vert b\right\vert _{C^{1}}$. Applying Neumann series to the operator $%
I-\mathcal{B}[b]$ for $\left\vert b\right\vert _{s+1}\leq \varepsilon $ with 
$\varepsilon <<1$, we obtain that $I-\mathcal{B}[b]$ is invertible and $\Phi
_{b}^{\ast }=-\left( I-\mathcal{B}[b]\right) ^{-1}\mathcal{A}[b]\in Z$
satisfies the estimate 
\begin{equation*}
\left\vert \Phi _{b}^{\ast }\right\vert _{Z}\lesssim \left\vert b\right\vert
_{C^{1}}\lesssim \left\vert b\right\vert _{s+1}~.
\end{equation*}

The functions $G(x-x^{\prime },y,-h+b(x^{\prime }))$ and $N_{b}\cdot \nabla
_{(x^{\prime },y^{\prime })}G(x-x^{\prime },y,-h+b(x^{\prime }))$ are
analytic for any $(x,y)\in D_{b}$. This implies that $\mathcal{A}[b](x,y)$
and $\mathcal{B}[b]\Phi _{b}^{\ast }(x,y)$ are analytic functions for $%
(x,y)\in D_{b}$. By construction the function $\Phi _{b}(x,y):=\mathcal{B}%
[b]\Phi _{b}^{\ast }(x,y)+\mathcal{A}[b](x,y)$ extends $\Phi _{b}^{\ast
}(x,y)$, is harmonic in $D_{b}$, and satisfies the boundary conditions (\ref%
{Harm1}) and (\ref{Harm2}). By (\ref{dist}) the bottom $y=-h+b(x)$ remains
at a $\gamma \varepsilon $-distance from the domain $\bar{D}\subset
D_{2\gamma \varepsilon }$ if $\left\vert b\right\vert _{s+1}\leq \varepsilon 
$. Thus, the fact that the functions $G(x-x^{\prime },y,-h+b(x^{\prime }))$
and $N_{b}\cdot \nabla _{(x^{\prime },y^{\prime })}G(x-x^{\prime
},y,-h+b(x^{\prime }))$ are analytic in the compact set $(x,y)\in \bar{D}$
implies the estimates $\left\vert \mathcal{A}[b]\right\vert _{C^{k}(\bar{D}%
)}\lesssim \left\vert b\right\vert _{s+1}$ and $\left\vert \mathcal{B}%
[b]\Phi _{b}\right\vert _{C^{k}(\bar{D})}\leq C\left\vert b\right\vert
_{s+1}\left\vert \Phi _{b}^{\ast }\right\vert _{Z}$. Since $\left\vert \Phi
_{b}^{\ast }\right\vert _{Z}\lesssim \left\vert b\right\vert _{s+1}$, from
the identity $\Phi _{b}=-\mathcal{A}[b]+\mathcal{B}[b]\Phi _{b}^{\ast }$ we
obtain that 
\begin{equation*}
\left\vert \Phi _{b}\right\vert _{C^{k}(\bar{D})}\lesssim \left\vert
b\right\vert _{s+1}
\end{equation*}
\end{proof}

\begin{remark}
We prove that if $\left\vert b\right\vert _{s+1}\leq \varepsilon $ with $%
s>1/2$, then there is a constant $C$ such that $\left\vert \Phi
_{b}\right\vert _{C^{k}(\bar{D})}\leq C\left\vert b\right\vert _{s+1}$. If
there is some $x_{0}\in S^{1}$ such that $b(x_{0})=2\gamma \varepsilon $,
then the Green function $G(x-x_{0},y,-h+b(x_{0}))$ explodes as $%
(x,y)\rightarrow (x_{0},-h+2\gamma \varepsilon )\in \partial D_{2\gamma
\varepsilon }$. This implies that the constant $C\rightarrow \infty $ as $%
\left\vert b\right\vert _{C^{0}}\rightarrow 2\gamma \varepsilon $. We used
the condition $\left\vert b\right\vert _{s+1}\leq \varepsilon $ to guarantee
that $b$ remains at a $\gamma \varepsilon $-distance from $\bar{D}\subset
D_{2\gamma \varepsilon }$ and the constant $C$ obtained from $%
G(x-x_{0},y,-h+b(x_{0}))$ does not blow up.
\end{remark}

There is an analytic expression of the Green function in terms of $b\in
H^{s+1}$ given by%
\begin{equation*}
G(x-x^{\prime },y,-h+b(x^{\prime }))=\sum_{m=0}^{\infty
}A_{m}[b](x,y,x^{\prime }),
\end{equation*}%
where $A_{m}[b]$ is a homogeneous function of degree $m$ in $b$. Thus $%
\mathcal{A}[b]$ is given by 
\begin{equation*}
\mathcal{A}[b](x,y)=\sum_{m=0}^{\infty }\int_{S^{1}}A_{m}[b](x,y,x^{\prime
})\partial _{x^{\prime }}b(x^{\prime })~dx^{\prime }\text{.}
\end{equation*}%
For example, we have that 
\begin{equation*}
A_{0}[b](x,y,x^{\prime })=\frac{1}{2\pi }\ln \left( \sin ^{2}(x-x^{\prime
})+\sinh ^{2}(y+h)\right) ,
\end{equation*}%
and $A_{1}[b](x,y,x^{\prime })=0$. Similarly, the operator $\mathcal{B}[b]$
has an analytic expression in powers of $b$ given by 
\begin{equation*}
\mathcal{B}[b]\Phi _{b}=\sum_{m=0}^{\infty
}\int_{S^{1}}B_{m}[b](x,y,x^{\prime })\Phi _{b}(x^{\prime },-h+b(x^{\prime
}))dx^{\prime },
\end{equation*}%
where $B_{m}[b]$ are homogeneous function of degree $m$ in $b$ such that 
\begin{equation*}
N_{b}\cdot \nabla _{(x^{\prime },y^{\prime })}G(x-x^{\prime
},y,-h+b(x^{\prime }))=\sum_{m=0}^{\infty }B_{m}[b](x,y,x^{\prime }).
\end{equation*}%
The Neumann boundary condition of the Green function $G(x-x^{\prime
},y,y^{\prime })$ implies that $B_{0}[b]=0$. In our proof we do not require
that $\mathcal{A}[b]$ and $\mathcal{B}[b]$ are analytic for $b\in H^{s+1}$,
we only used that $B_{0}[b]=0$.

\section{Continuation from the trivial solution}

To simplify the notation, we denote 
\begin{equation}
u=(\eta ,\xi )\in X:=H_{0}^{s+1}\times H_{0}^{s+1},\qquad \nabla
_{u}H(u;b,c)\in Y:=H_{0}^{s}\times H_{0}^{s}.  \label{X}
\end{equation}%
The gradient $\nabla H(u;0,c)$ has the trivial solution $u=0$. However, the
Hamiltonian $H=\hat{H}+\tilde{H}$ depends on $b$ through the term $\tilde{H}%
(u;b,c)$ that represents the interaction with the bottom $b$. Since $\tilde{H%
}$ contains terms of order $u$, then $u=0$ is not a trivial solution for $%
b\neq 0$ anymore. In this section we prove that the trivial solution can be
continued for small $b$ in the case that the Hessian $D^{2}H(0;0,c)$ is
invertible. This happens except for certain values of $c$, denoted by $c_{k}$%
. We continue the solution for $c\neq c_{k}$ by an application of the
Implicit Function Theorem.

From \cite{CrSu1} and \cite{CrNi1}, the expansion of $\hat{H}(u;0,c)$ in
power series is 
\begin{equation*}
\hat{H}(u;0,c)=\int_{S^{1}}\left( \frac{1}{2}\xi G(0;0)\xi -c\xi \partial
_{x}\eta +\frac{1}{2}g\eta ^{2}+h.o.t.\right) dx,
\end{equation*}%
where the linear operator $G(0;0)\xi $ in Fourier components $\xi
(x)=\sum_{k\in \mathbb{Z}}\xi _{k}e^{ikx}$ is given by 
\begin{equation*}
G(0;0)\xi =\sum_{k\in \mathbb{Z}}\left\vert k\right\vert \tanh (h\left\vert
k\right\vert )\xi _{k}e^{ikx}.
\end{equation*}

\begin{theorem}
\label{Thm1}For each regular velocity $c$ such that $c\neq c_{k}$, where%
\begin{equation*}
c_{k}^{2}:=g\frac{1}{\left\vert k\right\vert }\tanh (h\left\vert
k\right\vert ),\qquad k\in \mathbb{Z}\backslash \{0\},
\end{equation*}%
there is a unique steady wave $u_{b}\in X$ that is the continuation of the
trivial solution $u=0$ for $b$ in a small neighborhood of $0\in H^{s+1}$.
\end{theorem}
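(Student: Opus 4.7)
The plan is to apply the Implicit Function Theorem to the equation $F(u; b) := \nabla_u H(u; b, c) = 0$ as a map $X \times H_0^{s+1} \to Y$, with base point $(u, b) = (0, 0)$ and fixed regular velocity $c \neq c_k$.

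First I would verify the base condition $F(0; 0) = 0$. When $b = 0$, Theorem~\ref{ThmA} gives $\Phi_b \equiv 0$, so $\tilde{H}(\eta, \xi; 0, c) = 0$ and $H(\cdot; 0, c) = \hat{H}(\cdot; 0, c)$; from the explicit expression for $\nabla \hat{H}$ in the previous proposition, both components vanish at $u = 0$. The smoothness of $F$ as a map between the stated spaces is inherited from the analyticity of $G(\eta; b)$ (via Theorem A.11 of \cite{La}) and of $\Phi_b$ (via Theorem~\ref{ThmA}), together with the Banach algebra property of $H^s$ for $s > 1/2$, which was already exploited in the previous proposition.

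The heart of the argument is to show that $L := D_u F(0; 0) = D^2 H(0; 0, c) : X \to Y$ is an isomorphism. A direct linearization of $\nabla \hat{H}(u; 0, c)$ at $u = 0$ gives
\[
L(\eta, \xi) = \bigl(g\eta + c\,\partial_x \xi,\; -c\,\partial_x \eta + G(0;0)\xi\bigr).
\]
Expanding in Fourier modes $u(x) = \sum_{k \neq 0} u_k e^{ikx}$, the operator $L$ decouples into $2\times 2$ blocks
\[
L_k = \begin{pmatrix} g & ick \\ -ick & |k|\tanh(h|k|) \end{pmatrix},
\]
with determinant $\det L_k = g|k|\tanh(h|k|) - c^2 k^2 = k^2(c_k^2 - c^2)|k|$. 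The hypothesis $c \neq c_k$ for every $k \in \mathbb{Z}\setminus\{0\}$ makes each $L_k$ invertible. The part requiring some care is establishing that $L^{-1}$ is bounded from $Y$ to $X$: using the asymptotics $|k|\tanh(h|k|) \sim |k|$ and $\det L_k \sim -c^2 k^2$ as $|k| \to \infty$, the entries of $L_k^{-1}$ are uniformly $O(|k|^{-1})$, so $L^{-1}$ gains one derivative, mapping $H_0^s \times H_0^s$ continuously into $H_0^{s+1} \times H_0^{s+1}$. The only obstacle to an inverse bound is that the denominator $\det L_k$ be bounded away from zero on all of $\mathbb{Z}\setminus\{0\}$, which follows from $c \neq c_k$ for all $k$ combined with $c_k \to 0$ as $|k| \to \infty$ and the fact that only finitely many $c_k$ lie in any neighborhood of $c$.

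With $L$ an isomorphism, the Implicit Function Theorem yields an open neighborhood $U \subset H_0^{s+1}$ of $b = 0$ and a unique continuous (in fact analytic) map $b \mapsto u_b \in X$ with $u_0 = 0$ and $F(u_b; b) = 0$ for $b \in U$. This is precisely the claimed continuation. The expected main obstacle is purely notational, namely keeping straight the one-derivative gain of $L^{-1}$ to confirm the target space $X$ and the gain versus the loss introduced by nonlinear terms; this is already subsumed in the mapping property of $\nabla_u H$ established in the previous proposition.
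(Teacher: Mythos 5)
Your proposal follows the paper's proof exactly: apply the Implicit Function Theorem at $(u,b)=(0,0)$, using $\Phi_0=0$ to reduce the Hessian to the block-diagonal operator $L(c)$ and checking $\det A_k\neq 0$ for $c\neq c_k$; the paper simply defers the $L^{-1}$-boundedness estimate to \cite{CrNi1} where you spell it out. One minor algebraic slip: $g|k|\tanh(h|k|)=c_k^2k^2$, so $\det L_k=k^2(c_k^2-c^2)$ without the extra factor of $|k|$, but this does not affect the argument.
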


\begin{proof}
The gradient map $\nabla H\ $is well defined if $\left\vert b\right\vert
_{s+1}\leq \varepsilon $ with $s>1/2$ and $\eta >-h+2\gamma \varepsilon $.
We have that $\nabla H(0;0,c)=0$ for all $c\in \mathbb{R}$. Since $\Phi
_{0}=0$ for $b=0$, then $\tilde{H}(0;0,c)=0$ and 
\begin{equation*}
D^{2}H(0;0,c)=D^{2}\hat{H}(0;0,c)=L(c)\text{,}
\end{equation*}%
where 
\begin{equation*}
L(c):=\left( 
\begin{array}{cc}
g & c\partial _{x} \\ 
-c\partial _{x} & G(0;0)%
\end{array}%
\right) \text{.}
\end{equation*}%
Since $X$ and $Y$ do not have the $0$th Fourier component, the operator $L$
has the representation $L(c)u=\sum_{k\in \mathbb{Z}\backslash
\{0\}}A_{k}u_{k}e^{ikx}$, where $u(x)=\sum_{k\in \mathbb{Z}\backslash
\{0\}}u_{k}e^{ikx}$ and 
\begin{equation*}
A_{k}=\left( 
\begin{array}{cc}
g & ick \\ 
-ick & \left\vert k\right\vert \tanh (h\left\vert k\right\vert )%
\end{array}%
\right) .
\end{equation*}

For $k\in\mathbb{Z}\backslash\{0\}$ the matrix $A_{k}$ has determinant 
\begin{equation*}
\det A_{k}(c)=g\left\vert k\right\vert \tanh(h\left\vert k\right\vert
)-(ck)^{2}\text{.}
\end{equation*}
Since $\det A_{k}(c)=0$ at $c=c_{k}$, then the operator $L(c)=D^{2}\hat {H}%
(0;0,c)$ is invertible if $c\neq c_{k}$ (see Theorem 3.1 in \cite{CrNi1} for
details). Therefore, for a fixed $c\neq c_{k}$, the implicit function
theorem gives the unique solution $u_{b}(x)$ of the equation $\nabla
H(u;b,c)=0$ in a neighborhood of $(u;b)=(0;0)$.
\end{proof}

This theorem recovers the continuation result in \cite{Krasovskii1} for a
near-flat bottom.

\section{Continuation from non-degenerate steady waves}

For a flat bottom $b=0$, the Hamiltonian $H(u;0,c)$ is $G$-invariant under
the action 
\begin{equation*}
\varphi \cdot u(x)=u(x+\varphi ),\qquad \varphi \in G:=\mathbb{R}/2\pi 
\mathbb{Z}.
\end{equation*}%
Since the kernel of the Hessian $D^{2}H(0;0,c_{k})$ is a non-trivial $G$%
-representation, it has dimension two. The continuation of the trivial
solutions for small $b$ and velocities $c_{k}$ is a challenging task due to
the existence of the two-dimensional kernel. On the other hand, our
Hamiltonian formulation allows us to prove the persistence of solutions for
non-trivial $G$-orbits of solutions. This result can be applied to the
branch of Stokes waves that arises from $(u;c)=(0;c_{k})$. The local
bifurcation of Stokes waves is proved in \cite{CrNi1} using the Hamiltonian $%
H(u;0,c)$. The global property of the bifurcation can be obtained using this
setting and $G$-equivariant degree theory developed in \cite{IzVi03}.

Let $u_{c}=(\eta _{c},\xi _{c})\in X$ be a non-constant solution of $\nabla
_{u}\hat{H}(u_{c};0,c)=0$. The $G$-orbit of $u_{c}\in X$ is the manifold%
\begin{equation*}
G(u_{c})=\{u_{c}(x+\varphi )\in X:\varphi \in G\}.
\end{equation*}%
If $\partial _{x}u_{c}\in X$, then the orbit $G(u_{c})$ is a differential
manifold and the tangent space to the orbit at $u_{c}$ is 
\begin{equation*}
T_{u_{c}}G(u_{c})=\{r\partial _{x}u_{c}\in X:r\in \mathbb{R}\}.
\end{equation*}%
Since $H(u;0,c)$ is $G$-invariant, then $\nabla _{u}\hat{H}(\theta \cdot
u_{c};0,c)=0$ for any $\theta \in G$. That is, the orbit $G(u_{c})$ is a set
of critical points of $H$. Moreover, we have 
\begin{equation*}
0=\frac{d}{d\theta }\nabla _{u}\hat{H}(\theta \cdot u_{c};0,c)|_{\theta
=0}=D_{u}^{2}\hat{H}(u_{c};0,c)\partial _{x}u_{c}\text{.}
\end{equation*}%
Therefore, the function $\partial _{x}u_{c}$ is always in the kernel of the
Hessian $D^{2}\hat{H}(u_{c};0,c)$.

\begin{definition}
\label{nondeg}We say that the $G$-orbit of a non-constant solution $u_{c}\in
X$ of $\nabla _{u}\hat{H}(u;0,c)=0$ is \textbf{non-degenerate} if the
Hessian $D^{2}H(u_{c};0,c)$ is a Fredholm operator and its kernel is
generated by $\partial _{x}u_{c}\in X$, 
\begin{equation*}
\ker D^{2}H(u_{c};0,c)=T_{u_{c}}G(u_{c})\text{.}
\end{equation*}
\end{definition}

The kernel of a self-adjoint operator is perpendicular to its range. Define
the $L^{2}$-orthogonal complement to $\partial _{x}u_{c}\in L^{2}$ as%
\begin{equation*}
W=\{w\in L^{2}(S^{1};\mathbb{R)}:\left\langle w,\partial
_{x}u_{c}\right\rangle _{L^{2}}=0\}.
\end{equation*}%
If the $G$-orbit of a non-constant solution $u_{c}\in X$ is non-degenerate,
then the orthogonal complement to the kernel of $D^{2}H(u_{c};0,c)$ is $%
W\cap X$ and the range is $W\cap Y$. Therefore, the Fredholm property
implies that $D_{w}^{2}H(u_{c};0,c):W\cap X\rightarrow W\cap Y$ has a
bounded inverse,%
\begin{equation*}
\left\Vert D_{w}^{2}H(u_{c};0,c)^{-1}\right\Vert _{W\cap Y\rightarrow W\cap
X}\leq M.
\end{equation*}

We define the neighborhood $W_{0}$ of $0\in W\cap X$ as%
\begin{equation*}
W_{0}=\left\{ w\in W\cap X:\left\vert w\right\vert _{X}<\delta \right\} .
\end{equation*}%
Let $\mathcal{U}$ be a $\delta $-neighborhood of the orbit $G(u_{c})\subset
X $. If $u_{c}$ is a function with minimal period $2\pi $, then one can
consider the coordinates $\upsilon :G\times W_{0}\rightarrow \mathcal{U}$
given explicitly in Fourier components by 
\begin{equation}
\upsilon (\theta ,w)=\theta \cdot \left( u_{c}+w\right) =\sum_{k\in \mathbb{Z%
}}e^{i\theta k}\left( \hat{u}_{c,k}+\hat{w}_{k}\right) e^{ikx}.  \label{PSC}
\end{equation}%
This map is $G$-equivariant with the natural action of $G$ on $G\times W_{0}$
given by 
\begin{equation*}
\varphi \cdot (\theta ,w)=(\varphi +\theta ,w).
\end{equation*}%
In the case that $u_{c}$ is a function with minimal period $2\pi /p$, this
map is not bijective, but instead it is a covering map with fibers 
\begin{equation*}
\left\{ (\zeta +\theta ,w(t-\zeta )):\zeta \in \mathbb{Z}_{p}\right\} .
\end{equation*}

\begin{theorem}
\label{Thm3}Let $s>1/2$. If the $G$-orbit of a non-constant solution $u_{c}$
is non-degenerate, then there is an $\varepsilon >0$ such that for $%
\left\vert b\right\vert _{s+1}<\varepsilon $, the equation $\nabla
H(u;b,c)=0 $ has at least two solutions given by 
\begin{equation*}
u_{b,j}(x)=u_{c}(x+\theta _{j})+\mathcal{O}(\left\vert b\right\vert _{s+1}),
\end{equation*}%
where $\theta _{j}\in G=\mathbb{R}/2\pi \mathbb{Z}$ represents a phase shift
depending on $b$ and $\mathcal{O}(\left\vert b\right\vert _{s+1})\in $ $X$
is of order $\left\vert b\right\vert _{s+1}$. The two solutions are
different (not related by a phase shift) if $b$ is not constant.
\end{theorem}

\begin{proof}
The map $\upsilon $ provides new coordinates $(\theta ,w)$ of $\mathcal{U}$
for $(\theta ,w)\in G\times W_{0}$. The Hamiltonian defined in the
coordinates $(\theta ,w)$ is given by 
\begin{equation*}
\mathcal{H}_{b}(\theta ,w):=H(\upsilon (\theta ,w);b,c):G\times
W_{0}\rightarrow \mathbb{R}.
\end{equation*}%
Notice that $\upsilon $ is a covering map when $u_{c}$ has minimal period $%
2\pi /p$, in this case $\mathcal{H}_{b}(\theta ,w)$ is $\mathbb{Z}_{p}$%
-invariant with respect to the action $\zeta \cdot (\theta ,w)=(\zeta
+\theta ,w(t-\zeta ))$ of $\zeta \in \mathbb{Z}_{p}$.

The gradient map $\nabla H\ $is well defined if $\left\vert b\right\vert
_{s+1}\leq \varepsilon $ with $s>1/2$ and $\eta >-h+2\gamma \varepsilon $.
Notice that $\eta _{c}>-h+2\gamma \varepsilon $ if $\varepsilon $ is small
enough because $\eta $ is a continuous solution with $\eta _{c}>-h$. We
denote by $\nabla _{w}\mathcal{H}_{b}:G\times W_{0}\rightarrow W\cap Y$ to
the gradient taken with respect to the variables $w\in W_{0}$. By $G$%
-invariance of the Hamiltonian $\mathcal{H}_{0}(\theta ,w)$ for $b=0$, we
have that $\nabla _{w}\mathcal{H}_{0}(\varphi ,0)=0$ for $\varphi \in G$.
Furthermore, since $(0,0)\in G\times W_{0}$ corresponds to the point $%
u_{c}\in G(u_{c})$, the Hessian $D_{w}^{2}\mathcal{H}_{0}(0,0):W\cap
X\rightarrow W\cap Y$ is invertible with the bound $M$. The fact that $G$
acts by isometries and $\mathcal{H}_{0}$ is $G$-invariant implies that the
Hessian $D_{w}^{2}\mathcal{H}_{0}(\varphi ,0):W\cap X\rightarrow W\cap Y$ is
invertible for all $\varphi \in G$, 
\begin{equation*}
\left\Vert D_{w}^{2}\mathcal{H}_{0}(\varphi ,0)^{-1}\right\Vert _{W\cap
Y\rightarrow W\cap X}\leq M,\qquad \forall \varphi \in G\text{.}
\end{equation*}

We can make a Lyapunov-Schmidt reduction to express the normal variables $%
w\in W_{0}$ in terms of the\ variables along the orbit $\varphi \in G$.
Specifically, since $D_{w}^{2}\mathcal{H}_{0}(\varphi ,0)$ is invertible
with bound $M$, the Implicit Function Theorem implies that there are open
neighborhoods $\varphi \in U_{\varphi }\subset G$ and 
\begin{equation*}
0\in B_{\varepsilon _{\varphi }}:=\{b\in H_{0}^{s+1}:\left\vert b\right\vert
_{s+1}<\varepsilon _{\varphi }\},
\end{equation*}%
such that there is a unique map $w_{\varphi }:U_{\varphi }\times
B_{\varepsilon _{\varphi }}\rightarrow W_{0}$ satisfying $\nabla _{w}%
\mathcal{H}_{b}(\theta ,w_{\varphi }(\theta ;b))=0$. By the compactness of $%
G $ we have that $\varepsilon :=\min_{\varphi \in G}\varepsilon _{\varphi
}>0 $. By uniqueness of $w_{\varphi }(\theta ;b)$, we can glue the functions 
$w_{\varphi }(\theta ;b)$ together to define the map $w(\theta
;b)=w_{\varphi }(\theta ;b):G\times B_{\varepsilon }\rightarrow W$. Thus $%
w:G\times B_{\varepsilon }\rightarrow W$ is the unique map that solves 
\begin{equation*}
\nabla _{w}\mathcal{H}_{b}(\theta ,w(\theta ;b))=0,\qquad \left\vert
b\right\vert _{s+1}<\varepsilon .
\end{equation*}

We define the reduced Hamiltonian by 
\begin{equation*}
h_{b}(\theta )=\mathcal{H}_{b}(\theta ,w(\theta ;b)):G\rightarrow \mathbb{R}.
\end{equation*}%
Since 
\begin{equation*}
\nabla _{\theta }h_{b}(\theta )=\partial _{w}\mathcal{H}_{b}(\theta
,w(\theta ;b))\partial _{\theta }w(\theta ;b)+\partial _{\theta }\mathcal{H}%
_{b}(\theta ,w(\theta ;b))=\partial _{\theta }\mathcal{H}_{b}(\theta
,w(\theta ;b)),
\end{equation*}%
the critical points of the reduced Hamiltonian $h_{b}$ correspond to the
critical points of $\mathcal{H}_{b}$ for $\left\vert b\right\vert
_{s+1}<\varepsilon $. Notice that $h_{b}(\theta )=\mathcal{H}_{b}(\theta
,0)=H(u_{c}(x+\theta );b,c)$ is constant in $\theta $ only if $b$ is
constant. Since $h_{b}:G\rightarrow \mathbb{R}$ is $\mathbb{Z}_{p}$%
-invariant, we obtain critical points by reducing the Hamiltonian $h_{b}$ to
the quotient space $G/\mathbb{Z}_{p}\simeq S^{1}$. Therefore, the reduced
function $h_{b}$ in $G/\mathbb{Z}_{p}$ has at least two different critical
points when $b$ is not constant: one maximum $\theta _{1}$ and one minimum $%
\theta _{2}$. Furthermore, since the map $w(\theta ;b)$ satisfies $w(\theta
;0)=0$ for every $\theta \in G$, then $w(\theta _{j};b)=\mathcal{O}%
(\left\vert b\right\vert _{s+1})$ and 
\begin{equation*}
u_{b,j}(x):=\upsilon (\theta _{j},w(\theta _{j};b))=u_{c}(x+\theta _{j})+%
\mathcal{O}(\left\vert b\right\vert _{s+1}).
\end{equation*}
\end{proof}

\begin{remark}
In the previous theorem, if the solution $u_{c}$ is $2\pi /p$-periodic and
the bottom $b$ is $2\pi /q$-periodic, then the reduced Hamiltonian $%
h_{b}(\theta )$ is $\mathbb{Z}_{q}\times \mathbb{Z}_{p}$-invariant and the
solutions of $\nabla H(u;b,c)=0$ appear in sets of $\mathbb{Z}_{q}$-orbits
that consist of the $2\pi /q$-phase shifts of a steady wave $u_{b,j}$.
\end{remark}

\begin{remark}
The Palais Theorem \cite{Pa} proves, in the case of a compact group $G$
acting in a finite-dimensional Hilbert spaces $X$, that there is always a $G$%
-equivariant map $\upsilon :G\times _{G_{u_{c}}}W_{0}\rightarrow \mathcal{U}$%
, which is called the Palais-slice coordinate map, where $\mathcal{S}%
=\{\upsilon (0,w)\in \mathcal{U}:w\in W_{0}\}$ is called a \emph{slice} and $%
\mathcal{U}$ a \emph{tube} of the orbit $G(u_{c})$. The Palais Theorem is
not applicable to infinite-dimensional Hilbert spaces, but in our case this
map exists because action of $G$ is lineal. Indeed, if $u_{c}$ is a function
with minimal period $2\pi $, then the isotropy $G_{u_{c}}$ is trivial and
the Palais-slice coordinate $\upsilon :G\times W_{0}\rightarrow \mathcal{U}$
is given explicitly in Fourier components by the map (\ref{PSC}). Other
applications of Palais-slice coordinate to bifurcation in Hamiltonian
systems can be found in \cite{FoMo} and references therein.
\end{remark}

\begin{remark}
One can consider also local coordinates in neighborhoods $\mathcal{U}%
_{\theta }$ of $\theta \cdot u_{c}$ that will cover the $\delta $%
-neighborhood $\mathcal{U}$ of the compact orbit $G(u_{c})$. For instance,
one of those local coordinate maps $\upsilon _{\theta }$ is given by%
\begin{equation*}
\upsilon _{\theta }(x)=\theta \cdot u_{c}+x:X_{0}\rightarrow \mathcal{U}%
_{\theta }
\end{equation*}%
with $X_{0}=\left\{ x\in X:\left\vert x\right\vert _{X}<\delta \right\} $.
The application of these local coordinate maps to problems in PDEs can be
found in \cite{Am} and \cite{Kap}, and references therein. The disadvantage
of working with these local coordinate maps with respect to Palais-slice
coordinates is that one loses track of the natural $G$-equivariance of the
problem. Actually, the paper \cite{Kap} proves in Lemma 4.3.3 using the
Implicit Function Theorem that there is a unique map $\left( \tau ,w\right) :%
\mathcal{U}\rightarrow \mathbb{R}\times W$ such that for any $\upsilon
_{\theta }(x)\in \mathcal{U}$ one has $\upsilon _{\theta }(x)=\tau \cdot
u_{c}+w$ for some $\tau \in \mathbb{R}$ and $w\in W=\left\langle \partial
_{x}u_{c}\right\rangle ^{\perp }$, which is the representation that we give
explicitly in (\ref{PSC}).
\end{remark}

\vskip0.25cm \textbf{Acknowledgements.} W.C. was partially supported by the
Canada Research Chairs Program and NSERC through grant number 238452--16.
C.G.A was partially supported by a UNAM-PAPIIT project IN115019. We
acknowledge the assistance of Ramiro Chavez Tovar with the preparation of
the figures. C.G.A is indebted to M. Fontaine, P. Panayotaros, C.R. Barrera
and R.M. Vargas-Maga\~{n}a for discussions related to this project.

\end{document}